\documentclass[12pt]{amsart}
\usepackage{amsmath}
\usepackage{amsfonts}
\usepackage{amssymb}
\usepackage{amsthm}
\usepackage[T1]{fontenc}

\def\N{\mathbb{N}}
\def\F{\mathbb{F}}

\newcommand{\lcm }{\mathrm{lcm\,}}

\newtheorem{theo}{Theorem}[section]
\newtheorem{defi}[theo]{Definition}
\newtheorem{propo}[theo]{Proposition}
\newtheorem{coro}[theo]{Corollary}

\begin{document}
\title[Asymptotically exact sequences]{Families of curves over any finite field
with a class number greater than the  Lachaud - Martin-Deschamps bounds}
\author{St\'ephane Ballet}
\address{Institut de Math\'{e}matiques
de Luminy\\ case 930, F13288 Marseille cedex 9\\ France}
\email{ballet@iml.univ-mrs.fr}
\author{Robert Rolland}
\address{Institut de Math\'{e}matiques
de Luminy\\ case 930, F13288 Marseille cedex 9\\ France}
\email{robert.rolland@acrypta.fr}
\date{\today}
\keywords{finite field, function field, asymptotically exact sequence
of functions fields, class number, tower of function field}
\subjclass[2000]{Primary 12E20; Secondary 14H05}

\begin{abstract}
We study and explicitly construct some families of
asymptotically exact sequences of algebraic function fields.
It turns out that these families have an asymptotical class number
widely greater than the general Lachaud - Martin-Deschamps bounds.
We emphasize that we obtain asymptotically exact 
sequences of algebraic function fields over any finite field
$\F_q$, in particular when $q$ is not a square and that these
sequences are dense towers.
\end{abstract}

\maketitle

\section{Introduction}

The algebraic properties of algebraic function fields defined over a finite field is somehow reflected by their numerical properties, 
namely their numerical invariants such as the number of places of degree one over a given ground field extension, 
the number of classes of its Picard group, the number of effective divisors of a given degree and so on. 
When, for a given finite ground field, the sequence of the genus of 
a sequence of algebraic function fields tends to infinity, there exist
 asymptotic formulae for different numerical invariants. 
In \cite{tsfa}, Tsfasman generalizes some 
results on the number of rational points 
on the curves (due to Drinfeld-Vladut \cite{vldr}, 
and Serre \cite{serr2}) and on its Jacobian (due to Vladut \cite{vlad}, Rosemblum and Tsfasman \cite{rots}). 
He gives a formula for the asymptotic number of divisors, 
and some estimates for the number of points in the Poincar\'e filtration. 
In this aim, he introduced the notion of asymptotically exact 
family of curves defined over a finite field.     
Let us recall this notion in the language 
of algebraic function fields.

\begin{defi}
Let ${\mathcal F}/\F_q=(F_k/\F_q)_{k\geq 1}$ be
a sequence of algebraic function fields $F_k/\F_q$ defined 
over $\F_q$ of genus $g_k=g(F_k/\F_q)$. 
We suppose that  the sequence of the genus $g_k$ is an increasing 
sequence growing to infinity. 
The sequence ${\mathcal F}/\F_q$ is said to be asymptotically exact 
if for all $m \geq 1$ the following limit exists:  
$$\beta_m({\mathcal F}/\F_q)= \lim_{k \rightarrow +\infty} \frac{B_m(F_k/\F_q)}{g_k} $$  
where $B_m(F_k/\F_q)$ is the number of places of degree $m$
on $F_k/\F_q$. 

The sequence $(\beta_1,\beta_2,....,\beta_m,...)$ is called
the type  
of the aymptotically exact sequence 
${\mathcal F}/\F_q$.
 \end{defi}

Tsfasman and Vladut in \cite{tsvl} made use of this notion to obtain new
general results on the asymptotic properties of zeta functions 
of curves. 

Note that a simple diagonal argument proves that each sequence of algebraic
function fields of growing genus, defined over a finite field  
admits an asymptotically exact subsequence. Unfortunately,
this extraction method is not really suitable for the two
following reasons.
Firstly, in general we do not obtain by this process
an explicit asymptotically exact sequence 
of algebraic function fields defined over an arbitrary finite field, 
in particular when $q$ is not a square. Secondely, the extracted 
sequence is not a 
sufficiently dense asymptotically exact sequence of 
algebraic function fields defined over an arbitrary finite field, 
namely with a control on the growing of the genus. 
Let us define the notion
of density of a family of algebraic function fields defined over 
a finite field of growing genus:

\begin{defi}
Let ${\mathcal F}/\F_q=(F_k/\F_q)_{k\geq 1}$ be a sequence
 of algebraic function fields $F_k/\F_q$ 
of genus $g_k=g(F_k/\F_q)$, defined over $\F_q$. 
We suppose that  the sequence of genus $g_k$ is an increasing 
sequence growing to infinity. 
Then, the density of the sequence ${\mathcal F}/\F_q$ is 
$$d({\mathcal F}/\F_q)=\liminf_{k \rightarrow +\infty}\frac{g_k}{g_{k+1}}.$$
 \end{defi}

A high density can be a useful property in some applications
of sequences or towers of function fields.  
Until now, no explicit examples of dense asymptotically exact sequences 
${\mathcal F}/\F_q$ have been pointed out unless for the case
$q$ square and type $(\sqrt{q}-1,0,\cdots)$.

\medskip

In section 2, we show that we can construct general families, 
of asymptotically exact sequences of algebraic function 
fields defined over an arbitrary finite 
field $\F_q$ of type $(0,...,\frac{1}{r}(q^{\frac{r}{2}}-1),0,...,0,...)$ 
where $r$ is an integer $\geq 1$. In this aim, we prove
the main theorem \ref{lem00} on sequences of algebraic
function fields such that $\beta_r({\mathcal F}/\F_q)=\frac{1}{r}(q^{}-1)$.
We study for these general families the behaviour of
the class number $h_k$, and we compare our estimation to
the general known bounds of Lachaud - Martin-Deschamps. We also
study the number of effective divisors.

Next, in section 3, we construct explicit examples of very 
dense asymptotically 
exact sequences defined 
over an arbitrary finite field $\F_q$. For this purpose we use towers 
of algebraic function fields having for constant field extension
of a given degree $r$ the densified towers of Garcia-Stichtenoth 
(cf. \cite{gast} and \cite{ball2}). In particular, we construct 
an asymptotically 
exact tower of algebraic function 
fields defined over $\F_2$
with a maximal density . This tower has an interesting application
in the theory of algebraic complexity \cite{baro3}.

\section{General results}

\subsection{New families of asymptotically exact sequences }

First, let us recall certain asymptotic results. 
Let us first give the following result obtained 
by Tsfasman in \cite{tsfa}:
  
 \begin{propo}\label{fond}
 Let ${\mathcal F}/\F_{q}=(F_k/\F_{q})_{k\geq 1}$ be a sequence 
of algebraic function fields of increasing genus $g_k$ growing
to infinity. Let $f$ be a function from $\N$ to $\N$
such that $f(g_k)=o(log(g_k))$.
Then
\begin{equation}
 \limsup_{g_k\rightarrow +\infty} \frac{1}{g_k}
\sum_{m=1}^{f(g_k)} \frac{mB_m(F_k)}{q^{m/2}-1}\leq 1.
\end{equation}
 \end{propo}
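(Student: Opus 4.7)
The approach uses Weil positivity together with a Fej\'er kernel of carefully chosen length. Write the Weil polynomial of $F_k/\F_q$ as $L_k(T) = \prod_{i=1}^{2g_k}(1-\omega_i T)$, with reciprocal roots $\omega_i = \sqrt{q}\,e^{i\theta_i}$ occurring in complex conjugate pairs. The standard identity
\[
\sum_{i=1}^{2g_k}\cos(n\theta_i) = \frac{q^n+1-N_n(F_k)}{q^{n/2}},\qquad N_n(F_k) := \sum_{d\mid n}dB_d(F_k),
\]
bridges the analytic angles $\theta_i$ and the arithmetic invariants $B_m(F_k)$. For any non-negative trigonometric polynomial $\varphi(\theta) = c_0 + 2\sum_{n\geq 1}c_n\cos(n\theta)$, the positivity $\sum_i \varphi(\theta_i)\geq 0$ rearranges to
\[
\sum_{n\geq 1}c_n\frac{N_n(F_k)}{q^{n/2}}\leq g_kc_0 + \sum_{n\geq 1}c_n(q^{n/2}+q^{-n/2}).
\]

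The idea is to apply this to the Fej\'er kernel $F_{M'}(\theta) = M'^{-1}\bigl|\sum_{j=0}^{M'-1}e^{ij\theta}\bigr|^2 = 1 + 2\sum_{n=1}^{M'-1}(1-n/M')\cos(n\theta)\geq 0$ with $M' = M'(g_k)$ to be chosen. Substituting $N_n = \sum_{d\mid n}dB_d$ and switching the order of summation, the resulting inequality takes the form
\[
\sum_{d=1}^{M'-1}dB_d(F_k)\,\sigma_d(M')\leq g_k + O(q^{M'/2}),\qquad \sigma_d(M') := \sum_{k=1}^{\lfloor (M'-1)/d\rfloor}\Bigl(1-\frac{dk}{M'}\Bigr)q^{-dk/2}.
\]
A direct geometric-sum computation gives $\sigma_d(M') = \frac{1}{q^{d/2}-1} - O(q^{-M'/2}) - O(d/M')$, so $\sigma_d(M') = \frac{1}{q^{d/2}-1}(1+o(1))$ uniformly on any range $d\leq f(g_k)$ with $f(g_k)/M'\to 0$. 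Truncating the (non-negative) left-hand side to $d\leq f(g_k)$ and transferring the $\sigma_d$ approximation error to the right, one uses the Hasse--Weil bound $dB_d(F_k)\leq q^d + 2g_kq^{d/2}+1$ to control the accumulated correction, arriving at
\[
\sum_{d=1}^{f(g_k)}\frac{dB_d(F_k)}{q^{d/2}-1}\leq g_k + O(q^{M'/2}) + \bigl[O(q^{-M'/2})+O(f(g_k)/M')\bigr]\!\!\sum_{d\leq f(g_k)}\!\!dB_d(F_k).
\]

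The decisive step is to choose $M'(g_k)$ so that simultaneously (i) $f(g_k)/M'\to 0$, making the Fej\'er weights and the ratios $(q^{d/2}-1)\sigma_d(M')$ equal to $1+o(1)$ uniformly on $d\leq f(g_k)$, and (ii) $q^{M'/2} = o(g_k)$, making the principal Weil error $o(g_k)$ and, together with Hasse--Weil, the accumulated secondary errors $o(g_k)$ as well. The hypothesis $f(g_k) = o(\log g_k)$ is precisely what makes (i) and (ii) compatible: one can for instance take $M'(g_k) = 2f(g_k) + \lceil\sqrt{f(g_k)\log_q g_k}\rceil$. Dividing by $g_k$ and passing to $\limsup$ finishes the proof.

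\textbf{Main obstacle.} The whole argument rests on a delicate balancing of the kernel length $M'$. If $M'$ is too short relative to $f(g_k)$, the Fej\'er weights stay bounded away from $1$ and one ends up with a constant strictly greater than $1$ in the limit; if $M'$ is too long relative to $\log g_k$, the exponential Weil error $q^{M'/2}$ dominates $g_k$ and the inequality loses sharpness. The exponential scale $q^{M'/2}$ coming from the Weil side is the binding barrier, and is precisely what forces the restriction $f(g_k) = o(\log g_k)$ in the statement; a mere $f(g_k) = O(\log g_k)$ would only produce an inequality with a multiplicative constant $>1$ on the right.
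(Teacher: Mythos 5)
The paper itself gives no proof of this proposition (it is imported from Tsfasman's AGCT-3 paper), and the standard proof there is indeed the explicit-formula argument you outline: Weil positivity applied to a Fej\'er kernel whose length $M'$ is calibrated between $f(g_k)$ and $\log_q g_k$. Your set-up is correct through the inequality $\sum_d dB_d(F_k)\sigma_d(M')\leq g_k+O(q^{M'/2})$. The gap is in how you dispose of the approximation $\sigma_d(M')\approx\frac{1}{q^{d/2}-1}$. You transfer the additive error $O(q^{-M'/2})+O(d/M')$ to the right-hand side and propose to control the resulting term $\bigl[O(q^{-M'/2})+O(f/M')\bigr]\sum_{d\leq f(g_k)}dB_d(F_k)$ by Hasse--Weil. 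That step fails: Hasse--Weil only gives $\sum_{d\leq f}dB_d=O(q^f+g_kq^{f/2})$, so you need $\frac{f(g_k)}{M'}\,q^{f(g_k)/2}\to 0$, i.e. $f(g_k)q^{f(g_k)/2}=o(M')$. But your condition (ii) forces $M'=O(\log g_k)$, while for the admissible choice $f(g_k)=\lceil\sqrt{\log_q g_k}\rceil$ one has $fq^{f/2}\gg(\log g_k)^A$ for every $A$. So no choice of $M'$ --- in particular not $M'=2f+\lceil\sqrt{f\log_q g_k}\rceil$ --- satisfies both requirements, and the loss is not an artifact of a weak a priori bound, since $\sum_{d\leq f}dB_d$ genuinely can be of order $g_kq^{f/2}$. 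Relatedly, your assertion that $\sigma_d(M')=\frac{1}{q^{d/2}-1}(1+o(1))$ uniformly does not follow from the additive bound you state, because the error $O(d/M')$ is exponentially larger than $\frac{1}{q^{d/2}-1}$.

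The repair is to keep the error multiplicative, which also makes Hasse--Weil unnecessary. With $K_d=\lfloor (M'-1)/d\rfloor$ a direct computation gives
$$(q^{d/2}-1)\,\sigma_d(M')\;=\;1-q^{-dK_d/2}-(q^{d/2}-1)\frac{d}{M'}\sum_{k=1}^{K_d}kq^{-dk/2}\;\geq\;1-q^{-(M'-d)/2}-\frac{d}{M'(1-q^{-1/2})},$$
so $(q^{d/2}-1)\sigma_d(M')\geq 1-\epsilon_k$ uniformly for $d\leq f(g_k)$ with $\epsilon_k\to 0$, provided only that $f(g_k)/M'\to 0$ and $M'\to\infty$. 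Then
$$(1-\epsilon_k)\sum_{d=1}^{f(g_k)}\frac{dB_d(F_k)}{q^{d/2}-1}\;\leq\;\sum_{d=1}^{M'-1} dB_d(F_k)\,\sigma_d(M')\;\leq\;g_k+O(q^{M'/2}),$$
and any $M'$ with $f(g_k)=o(M')$ and $q^{M'/2}=o(g_k)$ --- e.g. $M'=\lfloor\log_q g_k\rfloor$ --- finishes the proof. Your closing heuristic for why $f(g_k)=o(\log g_k)$ is the natural threshold is correct; only the assembly of the error terms needs this change.
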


Using this proposition we can  obtain the
following main theorem:
 
 \begin{theo}\label{lem00}
 Let $r$ be an integer $\geq 1$ and ${\mathcal F}/\F_q=
(F_k/\F_{q})_{k \geq 1}$ be 
a sequence of algebraic function fields of increasing 
genus defined over $\F_{q}$ such that $\beta_r({\mathcal F}/\F_{q})=\frac{1}{r}(q^{\frac{r}{2}}-1)$. 
 Then $\beta_m({\mathcal F}/\F_{q})=0$ for any integer $m \neq r$. 
In particular, 
the sequence ${\mathcal F}/\F_q$ is asymptotically exact.
\end{theo}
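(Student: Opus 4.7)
The plan is to apply Proposition \ref{fond} with a constant cutoff function. Fix any integer $M \geq r$ and set $f \equiv M$; this trivially satisfies $f(g_k) = o(\log g_k)$. The proposition then yields
\begin{equation*}
\limsup_{k \to \infty} \frac{1}{g_k} \sum_{m=1}^{M} \frac{m B_m(F_k)}{q^{m/2}-1} \leq 1.
\end{equation*}

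Next I would isolate the contribution of the term $m = r$. The hypothesis $\beta_r(\mathcal{F}/\F_q) = \frac{1}{r}(q^{r/2}-1)$ gives
\begin{equation*}
\lim_{k \to \infty} \frac{1}{g_k} \cdot \frac{r B_r(F_k)}{q^{r/2}-1} = \frac{r \beta_r}{q^{r/2}-1} = 1.
\end{equation*}

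Since every term of the sum is non-negative, for any fixed $m_0 \in \{1,\dots,M\} \setminus \{r\}$ I would use the pointwise bound
\begin{equation*}
0 \leq \frac{m_0 B_{m_0}(F_k)}{g_k(q^{m_0/2}-1)} \leq \frac{1}{g_k}\sum_{m=1}^{M} \frac{m B_m(F_k)}{q^{m/2}-1} - \frac{r B_r(F_k)}{g_k(q^{r/2}-1)}
\end{equation*}
combined with the elementary inequality $\limsup (a - b) \leq \limsup a - \liminf b$ to deduce
\begin{equation*}
0 \leq \limsup_{k\to\infty} \frac{m_0 B_{m_0}(F_k)}{g_k(q^{m_0/2}-1)} \leq 1 - 1 = 0.
\end{equation*}
Hence $\lim_k B_{m_0}(F_k)/g_k = 0$, i.e.\ $\beta_{m_0}(\mathcal{F}/\F_q) = 0$. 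Since $M$ was arbitrary, this holds for every $m \neq r$; consequently all limits $\beta_m$ exist and the sequence is asymptotically exact.

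The only delicate point I anticipate is resisting the temptation to assume that the limits $\beta_m$ for $m \neq r$ exist before one has actually proved so. This forces one to work with $\limsup$ and $\liminf$ throughout, and to exploit the non-negativity of each summand to peel off the $m=r$ contribution from the finite sum. Everything else is elementary manipulation of Proposition \ref{fond}.
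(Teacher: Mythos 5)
Your proof is correct and follows essentially the same route as the paper: both apply Proposition \ref{fond} with a constant cutoff containing $r$ and the index in question, then peel off the $m=r$ term (whose normalized contribution tends to $1$ by hypothesis) and use non-negativity of the remaining summands to force each of them to $0$. If anything, your explicit use of $\limsup(a-b)\leq \limsup a - \liminf b$ is slightly more careful than the paper's write-up, which also contains a typo (writing $q^{j}-1$ where $q^{j/2}-1$ is meant), but the argument is the same.
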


\begin{proof}
Let us fix $m\neq r$ and let us prove that $\beta_m({\mathcal F}/\F_{q})=0$.
We use Proposition \ref{fond} with the constant function $f(g)=s=\max(m,r)$.
Then we get
$$ \limsup_{k\rightarrow +\infty} \frac{1}{g_k}
\sum_{j=1}^{s} \frac{jB_j(F_k)}{q^{j}-1}\leq 1.$$
But by hypothesis
$$ \limsup_{k\rightarrow +\infty} \frac{rB_r(F_k)}{g_k(q^{\frac{r}{2}}-1)}=
\lim_{k\rightarrow +\infty} \frac{rB_r(F_k)}{g_k(q^{\frac{r}{2}}-1)}=1.$$
Then 
$$\limsup_{g_k\rightarrow +\infty} \frac{1}{g_k}
\sum_{1\leq j\leq s; j\neq r} \frac{jB_j(F_k)}{q^{j}-1}=
\lim_{g_k\rightarrow +\infty} \frac{1}{g_k}
\sum_{1\leq j\leq s; j\neq r} \frac{jB_j(F_k)}{q^{j}-1}=0.$$
But
$$\frac{B_m(F_k)}{g_k} \leq \frac{(q^m-1)}{m} 
\left(\frac{1}{g_k}\sum_{1\leq j\leq s; j\neq r}
 \frac{jB_j(F_k)}{q^{j}-1}\right).$$
Hence
$$\lim_{g_k\rightarrow +\infty} \frac{B_m(F_k)}{g_k}=0.$$
\end{proof}
 
Note that for any $k$ the following holds:
$$B_1(F_k/\F_{q^r})=\sum_{i \mid r}iB_i(F_k/\F_q).$$
Then if $\beta_r({\mathcal F}/\F_q)=\frac{1}{r}(q^{\frac{r}{2}}-1)$,
by Theorem \ref{lem00} we conclude that $\beta_1({\mathcal F}/\F_{q^r})$
exists and that
$$\beta_1({\mathcal F}/\F_{q^r})=(q^{\frac{r}{2}}-1).$$
In particular the sequence ${\mathcal F}/\F_{q^r}$
reaches the Drinfeld-Vladut bound
and consequently $q^r$ is a square.

If $\beta_1({\mathcal F}/\F_{q^r})$ exists then 
it does not necessarly imply that $\beta_r({\mathcal F}/\F_{q})$ exists 
but only that  $\lim_{k \rightarrow +\infty} \frac{\sum_{m\vert r} mB_m(F_k/\F_q)}{g_k} $ exists. 
In fact, this converse depends on the defining 
equations of the algebraic function fields  $F_k/\F_q$.

Now, let us give a simple consequence of Theorem \ref{lem00}.

\begin{propo}\label{propotranslat}
 Let $r$ and $i$ be  integers $\geq 1$ such that $i$ divides $r$. 
 Suppose that
${\mathcal F}/\F_q=(F_k/\F_{q})_{k \geq 1}$ is an asymptotically
exact sequence
 of algebraic function fields defined over $\F_{q}$ 
of type $(\beta_1=0,\ldots,\beta_{r-1}=0,
\beta_r=\frac{1}{r}(q^{\frac{r}{2}}-1),\beta_{r+1}=0,\ldots)$. 
 Then the sequence 
${\mathcal F}/\F_{q^i}=(F_k/\F_{q^i})_{k \geq 1}$ 
 of algebraic function field defined over $\F_{q^i}$ 
is asymptotically exact of type
 $(\beta_1=0,..,\beta_{\frac{r}{i}-1}=0,\beta_{\frac{r}{i}}=
\frac{i}{r}(q^{\frac{r}{2}}-1),\beta_{\frac{r}{i}+1}=0,\ldots)$. 
\end{propo}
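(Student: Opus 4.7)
The plan is to reduce the proposition to a direct application of Theorem \ref{lem00}, this time applied to the extended sequence ${\mathcal F}/\F_{q^i}$ with the role of $r$ played by $r/i$ and that of $q$ played by $q^i$. Since constant field extension from $\F_q$ to $\F_{q^i}$ preserves the genus $g_k$ (assuming, as is standard, that $\F_q$ is the exact constant field of each $F_k$), the denominators in the definition of the $\beta_m$ are unchanged. So it suffices to compute $\beta_{r/i}({\mathcal F}/\F_{q^i})$ and verify that it equals $\frac{1}{r/i}\bigl((q^i)^{(r/i)/2}-1\bigr) = \frac{i}{r}(q^{r/2}-1)$; Theorem \ref{lem00} will then automatically force the vanishing of all the other $\beta_m({\mathcal F}/\F_{q^i})$ and establish the asymptotic exactness.

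The key input is the splitting rule for places under constant field extension. A place $P$ of degree $d$ on $F_k/\F_q$ corresponds to a Frobenius orbit of cardinality $d$; passing from $\F_q$ to $\F_{q^i}$ replaces $\mathrm{Frob}_q$ by $\mathrm{Frob}_q^{\,i}$, and the orbit breaks into $\gcd(d,i)$ sub-orbits of size $d/\gcd(d,i)$. Consequently $P$ yields exactly $\gcd(d,i)$ places of degree $d/\gcd(d,i)$ on $F_k\cdot\F_{q^i}/\F_{q^i}$, so
\[
B_{r/i}(F_k/\F_{q^i}) \;=\; \sum_{d\,:\,d/\gcd(d,i)=r/i}\gcd(d,i)\;B_d(F_k/\F_q).
\]
Writing $g=\gcd(d,i)$, the admissible $d$ are $d = g\cdot r/i$ with $g\mid i$ and $\gcd(r/i,\,i/g)=1$; in particular all such $d$ satisfy $d\leq r$, with equality if and only if $g=i$.

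Next I would divide by $g_k$ and pass to the limit. For each $d<r$ appearing in the finite sum above, the hypothesis $\beta_d({\mathcal F}/\F_q)=0$ forces $B_d(F_k/\F_q)/g_k\to 0$, so only the term $d=r$, $\gcd(d,i)=i$, contributes in the limit, giving
\[
\beta_{r/i}({\mathcal F}/\F_{q^i}) \;=\; i\,\beta_r({\mathcal F}/\F_q) \;=\; \frac{i}{r}\bigl(q^{r/2}-1\bigr) \;=\; \frac{1}{r/i}\bigl((q^i)^{(r/i)/2}-1\bigr).
\]
Applying Theorem \ref{lem00} over $\F_{q^i}$ with integer parameter $r/i\geq 1$ then yields $\beta_m({\mathcal F}/\F_{q^i})=0$ for every $m\neq r/i$, and in particular asymptotic exactness.

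The main obstacle is bookkeeping rather than conceptual: correctly enumerating the integers $d$ that can contribute to $B_{r/i}(F_k/\F_{q^i})$ via a place of degree $d$ on $F_k/\F_q$, and checking that any such $d$ other than $r$ is strictly smaller than $r$, so that the hypothesis $\beta_d({\mathcal F}/\F_q)=0$ actually applies to kill its contribution. A minor auxiliary point to verify beforehand is that $\F_{q^i}$ is not absorbed into the constant field of $F_k$, so that $F_k\cdot\F_{q^i}$ really is a function field over $\F_{q^i}$ of genus $g_k$; this is the standard assumption that $\F_q$ is the exact field of constants of $F_k$.
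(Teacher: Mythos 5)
Your proof is correct and follows essentially the same route as the paper: both rest on the splitting rule for places under constant field extension (Stichtenoth, Lemma V.1.9), write $B_{r/i}(F_k/\F_{q^i})$ as the same finite sum over degrees $d$ with $d/\gcd(d,i)=r/i$, and observe that only $d=r$ survives in the limit, giving $\beta_{r/i}=i\beta_r$. The only (welcome) difference is that you explicitly finish by invoking Theorem \ref{lem00} over $\F_{q^i}$ to get the vanishing of the remaining $\beta_m$ and hence asymptotic exactness, a step the paper leaves implicit.
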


\begin{proof}
Let us remark that by \cite[Lemma V.1.9, p. 163]{stic}, if $P$
is a place of degree $r'$ of $F/\F_q$, there are $\gcd((r',i))$
places of degree $\frac{r'}{\gcd(r',i)}$ over $P$ in the 
extension $F/\F_{q^i}$. As we are interested by the places of
degree $r/i$ in $F/\F_{q^i}$, let us introduce the set 
$$S=\{r'; r\,\gcd(r',i)=i\,r'\}=\{r'; \lcm(r',i)=r\}.$$
Then, 
$$B_{r/i}(F/\F_{q^i})= 
\sum_{r' \in S}\frac{ir'}{r}B_{r'}(F/\F_q).$$
We know that all the $\beta_j(F/\F_q)=0$ but 
$\beta_r(F/\F_q)=\frac{1}{r}(q^{\frac{r}{2}}-1)$.
Then 
$$\beta_{r/i}(F/\F_{q^i})= i \beta_r(F/\F_q),$$
$$\beta_{r/i}(F/\F_{q^i})= \frac{i}{r}\left(q^{\frac{r}{2}}-1\right).$$

\end{proof}

\subsection{Number of points of the Jacobian}

Now, we are interested by the Jacobian cardinality of the
asymptotically exact sequences ${\mathcal F}/\F_q=(F_k/\F_{q})_{k \geq 1}$ 
of 
type $(0,..,0,\frac{1}{r}(q^{\frac{r}{2}}-1),0,...,0)$.
 
 Let us denote by $h_k=h_k(F_k/\F_q)$ the class number of the 
algebraic function field $F_k/\F_q$.
Let us consider the following quantities 
introduced by Tsfasman in \cite{tsfa}:
 $$H_{inf}=H_{inf}({\mathcal F}/\F_q)=
\liminf _{k \rightarrow +\infty }{\frac{1}{g_k}\log h_k}$$
 $$H_{sup}=H_{sup}({\mathcal F}/\F_q)=
\limsup_{k \rightarrow +\infty }{\frac{1}{g_k}\log h_k}$$
 If they coincides, we just write: 
$$H=H({\mathcal F}/\F_q)=
\lim_{k \rightarrow +\infty}{\frac{1}{g_k}\log h_k}=H_{inf}=H_{sup}.$$

Then under the assumptions of the previous section, 
we obtain the following result on the sequence of class numbers 
of these families of algebraic function fields:

\begin{theo}\label{theoh}
 Let ${\mathcal F}/\F_q=(F_k/\F_{q})_{k \geq 1}$ be a sequence  
 of algebraic function fields of increasing genus defined 
over $\F_{q}$ such that $\beta_r({\mathcal F}/\F_{q})=\frac{1}{r}(q^{\frac{r}{2}}-1)$ where $r$ is an integer. 
 Then, the limit $H$ exists and we have: 
$$H=H({\mathcal F}/\F_q)=
\lim_{k \rightarrow +\infty}{\frac{1}{g_k}\log h_k}=
\log \frac{q^{q^{\frac{r}{2}}}}{(q^r-1)^{\frac{1}{r}(q^{\frac{r}{2}}-1)}}.$$
 \end{theo}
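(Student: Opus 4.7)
The plan is to deduce this from Theorem \ref{lem00} together with Tsfasman's general asymptotic formula for the class number of an asymptotically exact family. By Theorem \ref{lem00}, the hypothesis $\beta_r({\mathcal F}/\F_q)=\frac{1}{r}(q^{r/2}-1)$ already guarantees that the sequence is asymptotically exact and has type concentrated at the single index $m=r$, i.e.\ $\beta_m=0$ for every $m\neq r$. So the first step is just to quote that theorem and record the type.

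Next I would invoke the asymptotic formula proved by Tsfasman in \cite{tsfa} (and developed further in \cite{tsvl}): for any asymptotically exact sequence, the limit $H({\mathcal F}/\F_q)$ exists and is given by
\begin{equation*}
H({\mathcal F}/\F_q)=\log q + \sum_{m\geq 1}\beta_m\log\frac{q^m}{q^m-1}.
\end{equation*}
Substituting the spike type $\beta_r=\frac{1}{r}(q^{r/2}-1)$ and $\beta_m=0$ for $m\neq r$ collapses the sum to a single term, and a short rearrangement
\begin{equation*}
\log q+\tfrac{q^{r/2}-1}{r}\log\tfrac{q^r}{q^r-1}=q^{r/2}\log q-\tfrac{q^{r/2}-1}{r}\log(q^r-1)
\end{equation*}
produces exactly the logarithm of the quantity in the statement.

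If one treats Tsfasman's formula as a black box, the proof is essentially a one-line substitution; the real content sits inside that formula. If I had to reprove it, the main obstacle would be justifying the interchange of $\lim_k$ with the infinite sum over place degrees that arises from $h_k=L_k(1)$ and the Euler-product expression $\log\zeta_{F_k}(s)=-\sum_m B_m(F_k)\log(1-q^{-ms})$. That interchange is exactly the kind of tail estimate controlled by Proposition \ref{fond}, which caps $\sum_{1\leq m\leq f(g_k)} mB_m(F_k)/(q^{m/2}-1)$ uniformly in $k$; combining this uniform control with the pointwise convergence $B_m(F_k)/g_k\to\beta_m$ given by asymptotic exactness is what allows one to pass to the limit term by term and obtain the stated closed form for $H$.
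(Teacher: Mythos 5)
Your proof is correct and follows essentially the same route as the paper: the authors likewise invoke Tsfasman's Corollary 1 giving $H=\log q+\sum_{m\geq 1}\beta_m\log\frac{q^m}{q^m-1}$ for any asymptotically exact family and then substitute the type determined by Theorem \ref{lem00}. Your explicit verification of the arithmetic collapsing the sum to $q^{r/2}\log q-\frac{q^{r/2}-1}{r}\log(q^r-1)$ is a welcome addition that the paper omits.
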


\begin{proof}
By Corollary $1$ in \cite{tsfa}, we know that for any asymptotically 
exact family of algebraic function fields defined over $\F_q$, 
the limit $H$ exists and $H=\lim_{k \rightarrow +\infty}{\frac{1}{g_k}\log h_k}=\log q +\sum_{m=1}^{\infty}\beta_m.\log \frac{q^{m}}{q^{m}-1}$.
Hence, the result follows from Theorem \ref{lem00}. 
\end{proof}

\begin{coro}\label{coroboundLMD}
Let ${\mathcal F}/\F_q=(F_k/\F_{q})_{k \geq 1}$ be a sequence  
 of algebraic function fields of increasing genus defined over $\F_{q}$ such
 that $\beta_r({\mathcal F}/\F_{q})=\frac{1}{r}(q^{\frac{r}{2}}-1)$ 
where $r$ is an integer. 
 Then there exists an integer $k_0$ such that for any integer 
$k\geq k_0$, $$h_k> q^{g_k}$$
 \end{coro}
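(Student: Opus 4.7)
The plan is to reduce the corollary to the strict inequality $H > \log q$ via Theorem \ref{theoh}, and then verify that inequality by a short algebraic manipulation.

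First I would invoke Theorem \ref{theoh}, which gives the explicit formula
$$H = \lim_{k\to\infty}\frac{1}{g_k}\log h_k = \log\frac{q^{q^{r/2}}}{(q^r-1)^{(q^{r/2}-1)/r}}.$$
From the definition of a limit, $h_k > q^{g_k}$ eventually is equivalent to $\frac{1}{g_k}\log h_k > \log q$ eventually, and this follows from $H > \log q$ (strict) together with choosing $\varepsilon = H - \log q > 0$ in the definition of convergence to pick a suitable threshold $k_0$.

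The remaining task is therefore the strict inequality
$$q^{r/2}\log q - \frac{q^{r/2}-1}{r}\log(q^r-1) > \log q.$$
Rearranging, this is
$$(q^{r/2}-1)\log q > \frac{q^{r/2}-1}{r}\log(q^r-1),$$
and since $q\geq 2$ and $r\geq 1$ imply $q^{r/2}-1 > 0$, I can divide by $q^{r/2}-1$ to reduce the problem to $r\log q > \log(q^r-1)$, i.e.\ $q^r > q^r -1$, which is obvious.

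The only step requiring any care is verifying that the division by $q^{r/2}-1$ is legal (i.e.\ that the quantity is strictly positive), but this is immediate from $q\geq 2$. There is no real obstacle: the corollary is a direct consequence of the closed form for $H$ together with a one-line inequality, and I would present it in that order.
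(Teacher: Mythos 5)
Your proposal is correct and follows essentially the same route as the paper: both deduce the result from the explicit formula for $H$ in Theorem \ref{theoh} and reduce the required strict inequality to the trivial fact $q^r-1<q^r$ (the paper does this multiplicatively by bounding $(q^r-1)^{\frac{1}{r}(q^{\frac{r}{2}}-1)}$ above by $(q^r)^{\frac{1}{r}(q^{\frac{r}{2}}-1)}=q^{q^{\frac{r}{2}}-1}$, while you take logarithms and divide by $q^{\frac{r}{2}}-1>0$). The limit-plus-$\varepsilon$ argument for extracting $k_0$ is the same in both.
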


\begin{proof}
By Theorem \ref{theoh}, we have $\lim_{k \rightarrow +\infty} (h_k)^{\frac{1}{g_k}}= \frac{q^{q^{\frac{r}{2}}}}{(q^r-1)^{\frac{1}{r}(q^{\frac{r}{2}}-1)}}$.
But  $$\frac{q^{q^{\frac{r}{2}}}}{(q^r-1)^{\frac{1}{r}(q^{\frac{r}{2}}-1)}}>
\frac{q^{q^{\frac{r}{2}}}}{(q^r)^{\frac{1}{r}(q^{\frac{r}{2}}-1)}}=q.$$ 
Hence, for a sufficiently large $k_0$, we have for $k \geq k_0$ the
following inequality 
$$(h_k)^{\frac{1}{g_k}}>q.$$
\end{proof}

Let us compare this estimation of $h_k$ to the 
general lower bounds given by
G. Lachaud and M. Martin-Deschamps in \cite{lamd}.

\begin{theo}[Lachaud~ - ~Martin-Deschamps bounds]
 Let $X$ be a projective irreducible and non-singular algebraic curve
defined over the finite field $\F_q$
of genus $g$. Let $J_X$ be the jacobian of $X$
and $h$ the class number $h=|J_X(\F_q)|$. Then
\begin{enumerate}
 \item $h\geq L_1=q^{g-1}\frac{(q-1)^2}{(q+1)(g+1)}$,
 \item $h \geq L_2=\left(\sqrt{q}-1 \right)^2 \,\frac{g^{g-1}-1}{g}\,
\frac{|X(\F_q)|+q-1}{q-1},$,
 \item if $g> \sqrt{q}/2$ and if $B_1(X/\F_q) \geq 1$,
then the following holds:\\ $h \geq L_3=(q^g-1) \frac{q-1}{q+g+gq}$.
\end{enumerate}
 
\end{theo}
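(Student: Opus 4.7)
This is the classical theorem of Lachaud and Martin-Deschamps \cite{lamd}, stated here for comparison with Corollary \ref{coroboundLMD}; the paper does not reprove it. Nevertheless, the natural strategy one would follow starts from the identity $h = L_X(1)$, where
$$L_X(t) = \prod_{i=1}^{g}(1-\omega_i t)(1-\bar\omega_i t), \qquad |\omega_i|=\sqrt{q},$$
is the numerator of the zeta function of $X$. Writing $L_X(t)=\sum_{j=0}^{2g}a_j t^j$, the functional equation $L_X(t)=q^g t^{2g}L_X(1/(qt))$ yields $a_{2g-j}=q^{g-j}a_j$, the Weil bound gives $|a_j|\leq \binom{2g}{j}q^{j/2}$, and the coefficients $a_j$ are linked to the place numbers $B_m(X/\F_q)$ by Newton's identities.

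For (1), I would regard $h=\sum_j a_j$ as a linear functional in the variables $(a_j)$ and minimise it over the convex body cut out by the Weil inequalities, the functional equation, and the positivity of $L_X$ on the critical circle $|t|=1/\sqrt q$. Pairing $a_j$ with its dual $a_{2g-j}=q^{g-j}a_j$ reduces the sum to the first $g+1$ independent coefficients; the factor $1/(g+1)$ in $L_1$ records this reduction, while $q^{g-1}(q-1)^2/(q+1)$ is the value attained at the extremal configuration in which the Weil eigenvalues concentrate on a single real conjugate pair.

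For (2), the same extremal problem is refined by fixing $a_1=B_1(X/\F_q)-q-1$, which immediately introduces the arithmetic factor $(|X(\F_q)|+q-1)/(q-1)$ into the bound. The residual combinatorial term $(g^{g-1}-1)/g$ then comes from a Cauchy--Schwarz comparison of the geometric and arithmetic means of the factors $|1-\omega_i|^2=1+q-2\mathrm{Re}(\omega_i)$ in the product formula for $h$. For (3), the hypotheses $g>\sqrt q/2$ and $B_1\geq 1$ rule out the awkward boundary case in which a root $1/\sqrt q$ of $L_X$ lies on $[1/q,1]$; under this control a direct mean-value argument on $\log L_X$ between $t=1/q$ (where $L_X$ is computable from the functional equation, giving $L_X(1/q)=h/q^g$) and $t=1$ produces $(q^g-1)(q-1)/(q+g+gq)$.

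The main obstacle in all three parts is the same: one must solve an explicit constrained optimisation problem over the space of admissible Weil spectra, identify the extremal configuration, and verify that the Lagrange-multiplier critical point is a global minimum rather than a saddle or local one. Once this is settled, evaluation of the functional at the extremum and simplification to the stated closed forms $L_1, L_2, L_3$ is routine algebra.
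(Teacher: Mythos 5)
This theorem is not proved in the paper at all: it is quoted verbatim from Lachaud and Martin-Deschamps \cite{lamd} as a benchmark against which Corollary \ref{coroboundLMD} and Proposition \ref{propoLMDS} are measured, so there is no in-paper proof to compare your sketch with. You are right to flag that, but your proposed reconstruction of the original argument misses its essential mechanism. The feasible region you describe --- coefficients $a_j$ constrained only by the functional equation $a_{2g-j}=q^{g-j}a_j$ and the Weil-type bounds $|a_j|\leq \binom{2g}{j}q^{j/2}$ --- is far too large to yield $L_1$, $L_2$, $L_3$. Indeed, over all admissible Weil spectra the infimum of $h=\prod_{i=1}^{g}|1-\omega_i|^2$ is $(\sqrt{q}-1)^{2g}$, which for $q=2$ or $3$ tends to $0$ as $g\to\infty$, whereas $L_1$ grows like $q^{g-1}/g$; so no optimisation over that convex body, however carefully the Lagrange conditions are analysed, can produce the stated bounds. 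The input you are missing is the arithmetic one: the coefficients $A_n$ of the zeta function $Z(t)=P(t)/((1-t)(1-qt))$ count effective divisors of degree $n$, hence are non-negative and admit explicit lower bounds (e.g.\ $A_n\geq\binom{B_1+n-1}{n}$ from divisors supported on rational points, and $A_n\geq h/(\text{explicit factor})$ for $n\geq g$ via Riemann--Roch). Combining these with the exact identity obtained from $A_{2g-2-n}=q^{g-1-n}A_n$, which expresses $h$ times an explicit positive factor as $\sum_{n=0}^{g-1}A_n+\sum_{n=0}^{g-2}q^{g-1-n}A_n$, is what actually forces the lower bounds; the denominators $(g+1)$ and $q+g+gq$ come from crude majorisations of that factor, not from a dimension count of independent coefficients.

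Two smaller points. First, your claim $L_X(1/q)=h/q^{g}$ is correct and is indeed the kind of consequence of the functional equation that enters the real proof, but a ``mean-value argument on $\log L_X$'' between $t=1/q$ and $t=1$ is not a proof of (3) without controlling the zeros of $L_X$ in that interval, which is exactly what the divisor positivity does for you. Second, the statement of $L_2$ as printed contains a typo: the factor $\frac{g^{g-1}-1}{g}$ should read $\frac{q^{g-1}-1}{g}$ (the paper itself silently uses the corrected form $\frac{q^{g_k-1}-1}{g_k}$ in the proof of Proposition \ref{propoLMDS}); your sketch attempts to derive the misprinted expression, which should have been a warning sign that something was off.
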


Then we can prove that for a family of algebraic
function fields satisfying the conditions of Corollary \ref{coroboundLMD},
the class numbers $h_k$ greatly exceeds the bounds $L_i$.
More precisely

\begin{propo}\label{propoLMDS}
Let ${\mathcal F}/\F_q=(F_k/\F_{q})_{k \geq 1}$ be a sequence  
 of algebraic function fields of increasing genus defined over $\F_{q}$ such
 that $\beta_r({\mathcal F}/\F_{q})=\frac{1}{r}(q^{\frac{r}{2}}-1)$ 
where $r$ is an integer. 
 Then 
\begin{enumerate}
\item for $i=1,3$
$$lim_{k \rightarrow +\infty}\frac{h_k}{L_i}=+\infty,$$
\item for $i=2$ 
the following holds:
\begin{enumerate}
\item if r>1 then  
$$lim_{k \rightarrow +\infty}\frac{h_k}{L_2}=+\infty,$$ 
\item if r=1 then
$$\frac{h_k}{L_2}\geq 2,$$
\end{enumerate}
\end{enumerate}
\end{propo}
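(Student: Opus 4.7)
The plan is to combine the sharp exponential growth rate of $h_k$ supplied by Theorem \ref{theoh} with the observation that each of the bounds $L_i$ is at most $q^{g_k}$ multiplied by an expression that grows at most polynomially in $g_k$ and in $B_1(F_k)$. Set $H^{\ast} = q^{q^{r/2}}/(q^r-1)^{(q^{r/2}-1)/r}$, so Theorem \ref{theoh} yields $h_k^{1/g_k}\to H^{\ast}$, and the computation in the proof of Corollary \ref{coroboundLMD} shows $H^{\ast}>q$.

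For $L_1$ and $L_3$ the definitions give immediately $L_i = O(q^{g_k}/g_k)$ (the constant depending only on $q$), so for any sufficiently small $\varepsilon>0$ and $k$ large enough, $h_k/L_i \ge (g_k/C_i)\bigl((H^{\ast}-\varepsilon)/q\bigr)^{g_k}\to +\infty$, which settles case (1). For $L_2$ with $r>1$, Theorem \ref{lem00} forces $\beta_1=0$, hence $B_1(F_k)/g_k\to 0$; substituting into the formula for $L_2$ gives $L_2/q^{g_k}\to 0$, and combining with the same exponential lower bound on $h_k$ produces $h_k/L_2\to+\infty$, settling case (2)(a).

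Case (2)(b), $r=1$, is the delicate one. Here $q$ must be a square and $\beta_1 = \sqrt q - 1$ saturates Drinfeld--Vladut, so $B_1(F_k)\sim(\sqrt q-1)g_k$. Substituting into the formula for $L_2$ yields
$$L_2 \sim \frac{(\sqrt q-1)^2}{q(\sqrt q+1)}\,q^{g_k},$$
whereas Theorem \ref{theoh} gives $h_k\sim q^{g_k}\bigl(q/(q-1)\bigr)^{(\sqrt q-1)g_k}$. The ratio is asymptotically
$$\frac{h_k}{L_2} \sim \frac{q(\sqrt q+1)}{(\sqrt q-1)^2}\left(\frac{q}{q-1}\right)^{(\sqrt q-1)g_k},$$
which even at the smallest admissible value $q=4$ already exceeds $12\cdot(4/3)^{g_k}$. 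To promote this asymptotic to the universal inequality $h_k/L_2\ge 2$, one needs an effective (non-asymptotic) version of Theorem \ref{theoh} plus a direct small-case check at the first few values of $g_k$ and $q$ where the asymptotic expansion might be loose.

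The main obstacle is exactly this last step: cases (1) and (2)(a) follow from the general principle that $h_k$ grows strictly faster than $q^{g_k}$ while the $L_i$ are dominated by $q^{g_k}$ up to polynomial factors, but in (2)(b) numerator and denominator are of the same order $q^{g_k}$ so one must replace the asymptotic equivalence $h_k\sim(H^{\ast})^{g_k}$ by an honest lower bound valid for every $k$. The cleanest route is to rerun the generating-function argument behind Theorem \ref{theoh} while tracking error terms in $\frac{1}{g_k}\log h_k$, which is elementary but careful bookkeeping.
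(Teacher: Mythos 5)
Your treatment of cases (1) and (2)(a) is correct and is essentially the paper's own argument: bound $L_1$ and $L_3$ by $q^{g_k}/g_k$ up to constants, bound $L_2$ (for $r>1$) by $q^{g_k}\cdot\frac{B_1(F_k)+q-1}{g_k}$ times a constant and use $\beta_1=0$, then divide by the lower bound on $h_k$. (The paper gets by with the cruder input $h_k>q^{g_k}$ from Corollary \ref{coroboundLMD} rather than the full rate $h_k^{1/g_k}\to H^{\ast}$, but that is a cosmetic difference.)

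For case (2)(b) you take a genuinely different route from the paper and then stop short, declaring an obstacle that is not actually there. The paper's proof never invokes $\beta_1=\sqrt q-1$ or the precise growth rate of $h_k$: it simply bounds $B_1(F_k)$ from above by the Weil bound, $B_1(F_k)\leq q+1+2g_k\sqrt q$, plugs this into $L_2$ to get $L_2<0.4\,(1+\sqrt q/g_k)\,q^{g_k}$ (using the elementary estimate $2\frac{q+1-2\sqrt q}{(q-1)\sqrt q}<0.4$ for all $q\geq 2$), and concludes $h_k/L_2>2.5\,\frac{g_k}{g_k+\sqrt q}$ from $h_k>q^{g_k}$ — which is $\geq 2$ once $g_k\geq 4\sqrt q$. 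Note that this, like your own argument, only holds for $k$ large, since the input $h_k>q^{g_k}$ itself is only valid for $k\geq k_0$; the inequality in (2)(b) must be read asymptotically, so the ``universal for every $k$'' reading that blocks you is not what is being claimed. Moreover, your own machinery already closes the case: you do not need an effective version of Theorem \ref{theoh}, only the $\varepsilon$-device you deploy in case (1). Since $H^{\ast}=q\left(\frac{q}{q-1}\right)^{\sqrt q-1}>q$, choosing $\varepsilon$ with $H^{\ast}-\varepsilon>q$ gives $h_k\geq (H^{\ast}-\varepsilon)^{g_k}$ for large $k$, while $L_2\leq C_q\,q^{g_k}$ for large $k$ by your asymptotic for $B_1$; hence $h_k/L_2\geq C_q^{-1}\bigl((H^{\ast}-\varepsilon)/q\bigr)^{g_k}\to+\infty$, which is strictly stronger than the stated $h_k/L_2\geq 2$. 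Also be careful with the notation $h_k\sim (H^{\ast})^{g_k}$: Theorem \ref{theoh} gives only $\frac{1}{g_k}\log h_k\to\log H^{\ast}$, i.e.\ $h_k=(H^{\ast})^{g_k(1+o(1))}$, not an asymptotic equivalence — you flag this yourself, and the $\varepsilon$-device is the correct fix, not a rerun of the generating-function argument.
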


\begin{proof}
\begin{enumerate}
\item case $i=1$: 
the following holds
$$L_1=q^{g_k-1}\frac{(q-1)^2}{(q+1)(g_k+1)}=
q^{g_k} \frac{(q-1)^2}{q(q+1)(g_k+1)}
<\frac{q^{g_k}}{(g_k+1)},$$
so, using the previous corollary \ref{coroboundLMD}, we conclude that 
for $k$ large
$$\frac{h_k}{L_1}>g_k$$
and consequently
$$\lim_{k\rightarrow +\infty}\frac{h_k}{L_1}=+\infty;$$
\item case $i=2$:
\begin{enumerate}
\item case $r=1$: in this case, we just bound
the number of rational points by the Weil bound.
More precisely
$$L_2=\left(q+1-2\sqrt{q}\right) \,\frac{q^{g_k-1}-1}{g_k}\,
\frac{B_1(F_k/\F_q)+q-1}{q-1} \leq $$
$$
\left(q+1-2\sqrt{q}\right) \,\frac{q^{g_k-1}-1}{g_k}\,
\frac{2q+2g_k\sqrt{q}}{q-1} <
$$
$$2 \,\frac{q+1-2\sqrt{q}}{(q-1)\sqrt{q}}q^{g_k} \left(1+
 \frac{\sqrt{q}}{g_k}\right );$$
but for all $q \geq 2$ 
$$2 \,\frac{q+1-2\sqrt{q}}{(q-1)\sqrt{q}} < 0.4,$$
then 
$$L_2 < 0.4 \,\left(1+
 \frac{\sqrt{q}}{g_k}\right )q^{g_k},$$
hence
$$\frac{h_k}{L_2} > 2.5\,\frac{g_k}{g_k+\sqrt{q}} $$
which gives the resul;
\item case $r>1$: in this case we know that 
$$\lim_{k\rightarrow +\infty}\frac{B_1(F_k/\F_q)}{g_k}=
\beta_1({\mathcal F}/\F_q)=0.$$
But
$$L_2 < \frac{q+1-2\sqrt{q}}{(q-1)q}q^{g_k}
\frac{B_1(F_k/\F_q)+q-1}{g_k}.$$
Then
$$\frac{h_k}{L_2}> \frac{(q-1)q}{q+1-2\sqrt{q}}\,\,
\frac{g_k}{B_1(F_k/\F_q)+q-1}.$$
We know that
$$\lim_{k \rightarrow +\infty}\frac{g_k}{B_1(F_k/\F_q)+q-1}
=+\infty,$$
then
$$\lim_{k \rightarrow +\infty}\frac{h_k}{L_2}=+\infty.$$
\end{enumerate}
\item case $i=3$:
$$L_3 =(q^g-1) \frac{q-1}{q+g+gq} < \frac{q^{g_k}}{g_k},$$
then for $k$ large
$$\frac{h_k}{L_3}> g_k$$
and consequently
$$\lim_{k\rightarrow +\infty}\frac{h_k}{L_1}=+\infty.$$
\end{enumerate}
\end{proof}

As we see,  if ${\mathcal F}/\F_q=(F_k/\F_{q})_{k \geq 1}$ 
satisfies the assumptions of Theorem \ref{theoh}, we have $h_k>
 q^{g_k}>q^{g_k-1}\frac{(q-1)^2}{(q+1)g_k}$ 
for $k\geq k_0$ sufficiently large. 
In fact, the value $k_0$ depends at least
on the values of $r$ and $q$ and 
we can not know anything about this value in the general case.

\subsection{Number of effective divisors}

We consider now the problem of the determination 
of the zeta-function. It is known that to determinate the 
zeta-function 
of an algebraic function field of genus $g$ defined 
over a finite field, we just have to  study the number 
$A_i$ of effective divisors of degree $i$ for $i=0,...,g-1$.
Let us study the asymptotic situation. 
In this aim, we consider the following asymptotic 
values defined by Tsfasman in \cite{tsfa}, 
where $A_{\mu g_k}$ denotes the number $A_i$ where $i$
the nearest integer from $\mu g_k $:

$$\Delta(\mu)_{inf}({\mathcal F}/\F_q)=\liminf _{k \rightarrow +\infty }\frac{1}{g_k}\log A_{\mu g_k}$$
$$\Delta(\mu)_{sup}({\mathcal F}/\F_q)=\limsup_{k \rightarrow +\infty }{\frac{1}{g_k}}\log A_{\mu g_k}$$
If they coincides, we just write: $$\Delta(\mu)({\mathcal F}/\F_q)=
\lim_{k \rightarrow +\infty}{\frac{1}{g_k}}\log A_{\mu g_k}=\Delta(\mu)_{inf}=\Delta(\mu)_{sup}.$$

Then, we obtain the following result which gives exponential estimates of the number of effective divisors 
in the particular case of asymptically exact sequences defined previously.

\begin{theo}
Let ${\mathcal F}/\F_q=(F_k/\F_{q})_{k \geq 1}$ be a sequence  
 of algebraic function fields of increasing genus defined over $\F_{q}$ 
such that $\beta_r({\mathcal F}/\F_{q})=\frac{1}{r}(q^{\frac{r}{2}}-1)$ 
where $r$ is an integer.
Then, the limit $\Delta(\mu)$ exists.  Moreover, if we set: $$\mu_0=\frac{q^{\frac{r}{2}}-1}{q^r-1}$$ then for $\mu \geq \mu_0$, 
we have:

$$\Delta(\mu)({\mathcal F}/\F_q)=\log \frac{q^{{\mu}+q^{\frac{r}{2}}-1}}{q^r-1}=H-(1-\mu)\log q$$

and for $0 \leq \mu \leq \mu_0$, $$\Delta(\mu)({\mathcal F}/\F_q)=\log \frac{\mu\bigl(\frac{q^{\frac{r}{2}}-1}{\mu} +1\bigl)^{\frac{1}{r}(\mu+q^{\frac{r}{2}}-1)}}{q^{\frac{r}{2}}-1}.$$

More precisely, we have: $$\lim_{k \rightarrow +\infty}A_{\mu g_k}^{\frac{1}{g_k}}=\bigl(\frac{q^{\frac{r}{2}}-1}{\mu} +1\bigl)^{\frac{\mu}{r}}  \bigl(1-\bigl(\frac{q^{\frac{r}{2}}-1}{\mu} +1\bigl)^{-1}\bigl)^{-\frac{(q^{\frac{r}{2}}-1)}{r}}\bigl).$$

\end{theo}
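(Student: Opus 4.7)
The plan is to invoke the general asymptotic formulas of Tsfasman \cite{tsfa} for the number of effective divisors of an asymptotically exact sequence of algebraic function fields, and then specialize to our type $\beta_r=\frac{1}{r}(q^{r/2}-1)$, $\beta_m=0$ for $m\neq r$, which is guaranteed by Theorem~\ref{lem00}. In that framework the existence of $\Delta(\mu)$ (i.e.\ $\Delta_{inf}=\Delta_{sup}$) is a consequence of asymptotic exactness, in the same spirit as the existence of $H$ used in Theorem~\ref{theoh}; so we focus on identifying the two explicit expressions in the statement.

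For $\mu\geq \mu_0$, the decisive tool is Riemann-Roch, or equivalently the functional equation of the zeta function of $F_k/\F_q$, which forces $A_i=h_k\cdot (q^{i-g_k+1}-1)/(q-1)$ for $i\geq 2g_k-1$ and, through the functional equation and the non-negativity of coefficients, controls $A_i$ by the same shape on a larger range that extends below $g_k$. In the asymptotic regime this gives $\frac{1}{g_k}\log A_{\mu g_k}\to H+(\mu-1)\log q$ as soon as the Riemann-Roch expression dominates the combinatorial one. Substituting the closed-form value of $H$ from Theorem~\ref{theoh} yields the claimed identity $\Delta(\mu)=\log\frac{q^{\mu+q^{r/2}-1}}{(q^r-1)^{(q^{r/2}-1)/r}}=H-(1-\mu)\log q$.

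For $0\leq \mu\leq\mu_0$, Theorem~\ref{lem00} tells us that only $\beta_r$ is non-zero, so the dominant contribution to $A_{\mu g_k}$ comes from effective divisors supported exclusively on places of degree $r$. If $i=\mu g_k$ and $n=\mu g_k/r$ is the number of such places counted with multiplicity (so that $rn=i$), the main term is the number of multisets of size $n$ drawn from $B_r$ places, namely $\binom{B_r+n-1}{n}$. Setting $B_r\sim \frac{q^{r/2}-1}{r}g_k$ and applying Stirling with parameters $\alpha=(q^{r/2}-1)/r$ and $\nu=\mu/r$, the binomial coefficient behaves like $(\alpha+\nu)^{\alpha+\nu}/(\alpha^{\alpha}\nu^{\nu})$ to the power $g_k$, which after regrouping gives exactly
$$\lim_{k\to\infty}A_{\mu g_k}^{1/g_k}=\left(\frac{q^{r/2}-1}{\mu}+1\right)^{\mu/r}\left(1-\left(\frac{q^{r/2}-1}{\mu}+1\right)^{-1}\right)^{-(q^{r/2}-1)/r}.$$
Taking the logarithm yields the stated formula for $\Delta(\mu)$ in this regime.

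The main obstacle is to show that contributions to $A_{\mu g_k}$ coming from effective divisors that involve at least one place of degree $m\neq r$ are exponentially negligible compared with the main term above. Although $B_m(F_k/\F_q)=o(g_k)$ for $m\neq r$ by Theorem~\ref{lem00}, this in itself only bounds the number of such places, not the number of divisors they can produce after combining them with degree-$r$ places; a uniform tail estimate on the Poincar\'e (or zeta) series, of the type carried out in \cite{tsfa}, is required to control these remainders. A secondary but essential step is to verify that the two formulas agree continuously at the transition value $\mu_0$, which pins down $\mu_0=(q^{r/2}-1)/(q^r-1)$ and shows that the piecewise expression is the correct extension.
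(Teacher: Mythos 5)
Your proposal is correct and follows essentially the same route as the paper: the paper's entire proof consists in applying Theorem 6 of \cite{tsfa} (equivalently Proposition 4.1 and Theorem 4.1 of \cite{tsvl}) to the family, which is asymptotically exact of type $(0,\ldots,0,\beta_r=\frac{1}{r}(q^{\frac{r}{2}}-1),0,\ldots)$ by Theorem \ref{lem00}, and reading off the formulas. The ``main obstacle'' you flag --- controlling effective divisors supported partly on places of degree $m\neq r$ --- is precisely the content of those cited general theorems, so relative to the paper's level of detail there is no gap; your explicit multiset/Stirling computation and the continuity check at $\mu_0$ are correct, and your form of the $\mu\geq\mu_0$ formula, with the exponent $\frac{1}{r}(q^{\frac{r}{2}}-1)$ on $q^r-1$, is the one actually consistent with $H-(1-\mu)\log q$, whereas the paper's display omits that exponent.
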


\begin{proof}
It is sufficient to apply Theorem 6 in \cite{tsfa} or Proposition 4.1 in \cite{tsvl} with the tower ${\mathcal F}/\F_q=(F_k/\F_{q})_{k \geq 1}$. 
Then, for the last quantity, we directly apply Theorem 4.1 in \cite{tsvl} with this same tower  ${\mathcal F}/\F_q$.
\end{proof}

\section{Examples of asymptotically exact towers} \label{descenttowergast}

Let us note $\F_{q^2}$ a finite field with $q=p^r$ 
and $r$ an integer.

\subsection{Sequences ${\mathcal F}/\F_q$ with $\beta_2(F/\F_q)=\frac{1}{2}(q-1)$}

We consider the Garcia-Stichtenoth's tower ${T}_{0}$ over $\F_{q^2}$ 
constructed in \cite{gast}. Recall that this tower
is defined recursively in the following way.
We set $F_1=\F_{q^2}(x_1)$ the rational function field over  $\F_{q^2}$,
and for $i \geq 1$ we define 
$$F_{i+1}=F_i(z_{i+1}),$$
where $z_{i+1}$ satisfies the equation
$$z_{i+1}^q+z_{i+1}=x_i^{q+1},$$
with
$$x_i=\frac{z_i}{x_{i-1}} \hbox{ for } i\geq 2.$$

We consider the completed Garcia-Stichtenoth's tower ${T}_1$ 
over $\F_{q^2}$ 
studied in \cite{ball3} 
obtained from ${T}_{0}$
by adjonction of intermediate steps. Namely
we have
$${T}_1:\, F_{1,0}\subset \cdots  
\subset F_{i,0} \subset F_{i,1}\subset\cdots \subset F_{i,s} 
\subset \cdots \subset F_{i,n-1}
\subset F_{i+1,0}\subset \cdots $$
where the steps $F_{i,0}$ are the steps $F_i$ of the Garcia-Stichtenoth's tower
and where $F_{i,s}$ ($1 \leq s \leq n-1$) are the intermediate steps.

Let us denote by $g_k$ the genus of $F_k$ in $T_0$,
by $g_{k,s}$ the genus of $F_{k,s}$ in 
$T_1$ and by $B_1(F_{k,s})$ the number of places of 
degree one of $F_{k,s}$ in $T_1$.

Recall that each extension $F_{k,s}/F_k$ is Galois of degree $p^s$ wih full constant field $\F_{q^2}$. 
Moreover, we know by \cite{balbro} that the descent of the 
definition field of the tower $T_1$ from 
$\F_{q^2}$ to  $\F_{q}$ is possible.
More precisely,  there exists a tower $T_2$ 
defined over $\F_{q}$ given by a sequence:
$${T}_2:\, G_{1,0}\subset \cdots  
\subset G_{i,0} \subset G_{i,1}\subset\cdots \subset G_{i,s} 
\subset \cdots \subset G_{i,n-1}
\subset G_{i+1,0}\subset \cdots $$
defined over the constant fied $\F_q$ and related to
the tower $T_1$ by
$$F_{k,s}=\F_{q^2}G_{k,s} \quad \hbox{for all } k  \hbox{ and } s,$$
namely $F_{k,s}/\F_{q^2}$ is the constant field extension of $G_{k,s}/\F_q$. 
Let us prove a proposition establishing 
that the tower $T_2/\F_q$ is asymptotically exact with good density.

\begin{propo}\label{subfield}
Let $q=p^r$. For any integer $k\geq1$, for any integer $s$ such that $s=0,1,...,r$, 
the algebraic function field $G_{k,s}/\F_{p}$ in the tower $T_2$ has a 
genus $g(G_{k,s})=g_{k,s}$ with $B_{1}(G_{k,s})$ places of degree one, 
$B_{2}(G_{k,s})$ places of degree two such that:
\begin{enumerate}

\item $G_k \subseteq G_{k,s} \subseteq G_{k+1}$ 
with $G_{k,0}=G_k$ and $G_{k+1,0}=G_{k+1}$.


\item $g(G_{k,s})\leq \frac{g(G_{k+1})}{p^{r-s}}+1$ with $g(G_{k+1})=g_{k+1}\leq q^{k+1}+q^{k}$ .  


\item $B_{1}(G_{k,s})+2B_{2}(G_{k,s}) \geq (q^2-1)q^{k-1}p^{s}$.


\item $\beta_2(T_2/\F_q)= \lim_{g_{k,s}\rightarrow +\infty} \frac{B_2(G_{k,s}/\F_q)}{g_k} =\frac{1}{2}(q-1)$.

\item $d(T_2/\F_q)=\lim_{l \rightarrow +\infty}\frac{g(G_{l})}{g(G_{l+1})}=\frac{1}{p}$ where $g(G_l)$ and $g(G_{l+1})$ denote the genus of two consecutive algebraic function fields in $T_2$.
\end{enumerate}  
\end{propo}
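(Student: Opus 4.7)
The plan is to handle the five statements in order, relying repeatedly on two facts about the constant-field extension $F_{k,s} = \F_{q^2} G_{k,s}$: genera coincide, $g(G_{k,s}) = g(F_{k,s}) = g_{k,s}$, and a place of $G_{k,s}/\F_q$ of degree $1$ (resp.\ $2$) lifts to $1$ (resp.\ $2$) rational places of $F_{k,s}/\F_{q^2}$, giving the descent identity
$$B_1(F_{k,s}/\F_{q^2}) = B_1(G_{k,s}/\F_q) + 2 B_2(G_{k,s}/\F_q).$$
Claim (1) is immediate from the construction of $T_2$: the chain $F_k \subseteq F_{k,s} \subseteq F_{k+1}$ holds in $T_1$ by definition and descends to $T_2$ since $F_{k,s} = \F_{q^2} G_{k,s}$. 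For (2), because $F_{k+1}/F_{k,s}$ is Galois of degree $p^{r-s}$, Riemann-Hurwitz gives $2g_{k+1} - 2 \geq p^{r-s}(2g_{k,s} - 2)$, hence $g_{k,s} \leq g_{k+1}/p^{r-s} + 1$; the bound $g_{k+1} \leq q^{k+1}+q^k$ is the Garcia-Stichtenoth genus estimate from \cite{gast}. For (3), each of the $(q^2-1)q^{k-1}$ rational places of $F_k/\F_{q^2}$ constructed in \cite{gast} splits completely in $F_{k+1}/F_k$, and therefore into $p^s$ rational places in the intermediate field $F_{k,s}$, so $B_1(F_{k,s}/\F_{q^2}) \geq (q^2-1)q^{k-1}p^s$, and the descent identity converts this into the stated lower bound on $B_1(G_{k,s}) + 2B_2(G_{k,s})$.

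For (4), I would first observe that (2) and (3) with $s=0$ show that $T_1/\F_{q^2}$ reaches the Drinfeld-Vladut bound, so $B_1(F_{k,s}/\F_{q^2})/g_{k,s} \to q-1$; by the descent identity,
$$\lim \frac{B_1(G_{k,s}/\F_q) + 2 B_2(G_{k,s}/\F_q)}{g_{k,s}} = q-1.$$
I would then extract any subsequence along which $B_1(G_{k,s})/g_{k,s} \to \beta_1$ and $B_2(G_{k,s})/g_{k,s} \to \beta_2$, and apply Proposition \ref{fond} to $T_2/\F_q$ with the constant function $f \equiv 2$ to obtain
$$\frac{\beta_1}{\sqrt{q}-1} + \frac{2\beta_2}{q-1} \leq 1,$$
together with the relation $\beta_1 + 2\beta_2 = q-1$ read off the previous display. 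Substituting reduces the inequality to $\beta_1\bigl(\tfrac{1}{\sqrt{q}-1} - \tfrac{1}{q-1}\bigr) \leq 0$, and since $\sqrt{q}-1 < q-1$ for $q \geq 2$, we are forced into $\beta_1 = 0$ and $\beta_2 = (q-1)/2$. As every convergent subsequence yields the same limit, the original sequence converges to $(q-1)/2$.

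For (5), the matching lower bound $g_{k,s} \geq g_{k+1}/p^{r-s}$ again comes from Riemann-Hurwitz applied to the $p$-group extension $F_{k+1}/F_{k,s}$; combined with (2) this gives $g(G_l)/g(G_{l+1}) \to 1/p$ for every pair of consecutive fields in $T_2$, whence $d(T_2/\F_q) = 1/p$. The main obstacle lies in (4): the descent identity only controls the combination $\beta_1 + 2\beta_2$, so a priori all the weight could concentrate on $\beta_1$; the vanishing of $\beta_1$ is obtained solely through the strict gap $\sqrt{q}-1 < q-1$ supplied by Proposition \ref{fond}, and this is exactly the mechanism that pins the type of $T_2/\F_q$ to the extremal shape $(0, (q-1)/2, 0, \ldots)$ required for Theorem \ref{lem00} to apply.
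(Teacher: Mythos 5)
Your treatment of items (1)--(3) matches the paper's in substance, and for item (4) you take a genuinely different route. The paper kills $\beta_1(T_2/\F_q)$ by a direct geometric argument: the degree-one places of $G_{k,s}/\F_q$ correspond to the rational places of $F_{k,s}/\F_{q^2}$ that are totally ramified in the tower, so $B_1(G_{k,s}/\F_q)\leq q+1$ is actually \emph{bounded}, and then the descent identity $B_1(F_{k,s}/\F_{q^2})=B_1(G_{k,s}/\F_q)+2B_2(G_{k,s}/\F_q)$ together with $\beta_1(T_1/\F_{q^2})=q-1$ gives $\beta_2=\frac{1}{2}(q-1)$. You instead only use the saturation $\beta_1+2\beta_2=q-1$ and squeeze out $\beta_1=0$ from Proposition \ref{fond} via the strict inequality $\sqrt{q}-1<q-1$, passing to convergent subsequences to justify that the limits exist. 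That argument is correct and is essentially the mechanism of Theorem \ref{lem00}; it is more robust (it needs no knowledge of which rational places survive the descent) at the cost of giving only $\beta_1=0$ rather than boundedness of $B_1$. Either route is acceptable.

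There is, however, a genuine error in your item (5). You claim Riemann--Hurwitz gives the ``matching lower bound $g_{k,s}\geq g_{k+1}/p^{r-s}$''; it gives the opposite. For the degree-$p^{r-s}$ extension $F_{k+1}/F_{k,s}$, Riemann--Hurwitz reads $2g_{k+1}-2=p^{r-s}(2g_{k,s}-2)+\deg\mathrm{Diff}\geq p^{r-s}(2g_{k,s}-2)$, which is an \emph{upper} bound $g_{k,s}\leq \frac{g_{k+1}-1}{p^{r-s}}+1$ --- the same direction as item (2). A lower bound $g_{k,s}\geq g_{k+1}/p^{r-s}$ would force $\deg\mathrm{Diff}\leq 2(p^{r-s}-1)$, which fails badly here since the extensions are wildly ramified with different degree growing with $k$. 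The correct way to get the density is to produce a genus lower bound from the point count: since $B_1(F_{k,s}/\F_{q^2})\geq (q^2-1)q^{k-1}p^s$ and $\limsup B_1/g\leq q-1$ over $\F_{q^2}$ (Drinfeld--Vladut), one gets $g_{k,s}\geq (q+1)q^{k-1}p^s(1-o(1))$, which matches the upper bound $g_{k,s}\leq (q+1)q^{k-1}p^s+1$ of item (2) and yields $g(G_l)/g(G_{l+1})\to 1/p$. You should be aware that the paper's own proof is silent on item (5) altogether, so you are not missing a published argument --- but the justification you supply does not work as written.
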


\begin{proof}
The property $1)$ follows directly from Theorem 4.3 in  \cite{balbro}. Moreover, by Theorem 2.2 in \cite{ball3}, we have 
$g(F_{k,s})\leq \frac{g(F_{k+1})}{p^{r-s}}+1$ with $g(F_{k+1})=g_{k+1}\leq q^{k+1}+q^{k}$ . Then, 
as the algebraic function field $F_{k,s}$ is a constant field extension of $G_{k,s}$, for any integer $k$ and $s$ 
the algebraic function fields $F_{k,s}$ and $G_{k,s}$ have the same genus. So, the inequality satisfied by the 
genus $g(F_{k,s})$ is also true for the genus $g(G_{k,s})$. Moreover, the number of places of degree one 
$B_1(F_{k,s}/\F_{q^2})$ of $F_{k,s}/\F_{q^2}$ is such that $B_1(F_{k,s}/\F_{q^2})\geq (q^2-1)q^{k-1}p^{s}$. Then, as 
the algebraic function field $F_{k,s}$ is a constant field extension of $G_{k,s}$ of degree $2$, it is clear that for any
integer $k$ and $s$, we have $B_{1}(G_{k,s}/\F_p)+2B_{2}(G_{k,s}/\F_p)\geq (q^2-1)q^{k-1}p^{s}$. 
Moreover, we know that for any integer $k\geq 1$, the number of places of degree one $B_1( G_{k,s}/\F_q)$ of  $G_{k,s}/\F_q$ 
corresponds at most to the number of places of degree one $B_1(F_{k,s}/\F_{q^2})$ of $F_{k,s}/\F_{q^2}$ 
which are totally ramified in the tower $T_1$ by \cite{gast}.  Hence, for any integer $k$ and $s$, we have $B_1( G_{k,s}/\F_q)\leq q+1$ 
and so $\beta_1(T_2/\F_q)=0$. Moreover,  $B_{1}(G_{k,s}/\F_q)+2B_{2}(G_{k,s}/\F_p)= B_1(F_{k,s}/\F_{q^2})$ and 
as by  \cite{gast}, $\beta_1(T_1/\F_{q^2})=A(q^2)$, we have $\beta_2(T_2/\F_p)=\frac{1}{2}(q-1)$.

\end{proof}

\subsection{Sequences ${\mathcal F}/\F_q$ with $\beta_4({\mathcal F}/\F_q)=\frac{1}{4}(q^2-1)$}

\subsubsection{The descent on the definition field $\F_{p}$ of a Garcia-Stichtenoth tower defined over $\F_{q^2}$} \label{sectiondescent3}

Now, we are interested in searching the descent of the 
definition field of the tower $T_1$ 
from $\F_{q^2}$ to  $\F_{p}$ if it is possible. 
In fact, we can not establish a general result 
but we can prove that it is possible in the case 
of caracteristic $2$ which is given by the following result.

\begin{propo}\label{proptour}
Let $p=2$. If $q=p^2$, the descent of the definition 
field of the tower $T_1$ from $\F_{q^2}$ to  $\F_{p}$ 
is possible.
More precisely,  there exists a tower $T_3$ 
defined over $\F_{p}$ given by a sequence:

$$T_3=H_{1,0} \subseteq H_{1,1} 
\subseteq H_{2,0}\subseteq H_{2,1}\subseteq ...$$
defined over the constant fied $\F_p$ and related to
the towers $T_1$ and $T_2$ by
$$F_{k,s}=\F_{q^2}H_{k,s} \quad for~all~k~and~s=0,1,$$
$$G_{k,s}=\F_{q}H_{k,s} \quad for~all~k~and~s=0,1,$$
namely $F_{k,s}/\F_{q^2}$ is the constant field 
extension of $G_{k,s}/\F_q$ and $H_{k,s}/\F_q$ 
and $G_{k,s}/\F_q$ is the constant field extension 
of $H_{k,s}/\F_p$. 
\end{propo}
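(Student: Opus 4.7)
The plan is to exhibit $T_3$ by writing down explicit defining equations over $\F_2$ that recover $T_2$ and $T_1$ after scalar extension to $\F_4$ and $\F_{16}$ respectively. The key algebraic input is the characteristic-$2$ Artin-Schreier identity
\begin{equation*}
z^4 + z = (z^2 + z)^2 + (z^2 + z),
\end{equation*}
which factors each degree-$4$ Artin-Schreier step $z_{k+1}^{q} + z_{k+1} = x_k^{q+1}$ of the Garcia-Stichtenoth tower $T_0$ (here $q=4$, so defined over $\F_{16}$) through the two degree-$2$ Artin-Schreier steps
\begin{equation*}
w_{k+1}^2 + w_{k+1} = x_k^5, \qquad z_{k+1}^2 + z_{k+1} = w_{k+1},
\end{equation*}
with $w_{k+1} := z_{k+1}^2 + z_{k+1}$. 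Since both defining polynomials already have coefficients in $\F_2$, they are the natural candidates for the descended intermediate and full steps of $T_3$.

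Accordingly, I would define inductively
\begin{equation*}
H_{1,0} := \F_2(x_1), \qquad H_{k,1} := H_{k,0}(w_{k+1}), \qquad H_{k+1,0} := H_{k,1}(z_{k+1}),
\end{equation*}
together with $x_{k+1} := z_{k+1}/x_k$, and then verify: (a) each $H_{k,s}$ has exact constant field $\F_2$, and (b) $\F_4 \cdot H_{k,s} = G_{k,s}$ and $\F_{16} \cdot H_{k,s} = F_{k,s}$. Once (a) is known, (b) is essentially automatic: identifying generators gives obvious inclusions $H_{k,s} \subseteq G_{k,s} \subseteq F_{k,s}$ hence $\F_4 H_{k,s} \subseteq G_{k,s}$ and $\F_{16} H_{k,s} \subseteq F_{k,s}$, and these become equalities by a degree count, since constant-field extension multiplies the degree over $H_{k,s}$ by exactly $[\F_4:\F_2]$ (respectively $[\F_{16}:\F_2]$) as soon as the constant field of $H_{k,s}$ is $\F_2$, matching the step-by-step degrees already known for $T_2$ and $T_1$.

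The main obstacle is (a). For an Artin-Schreier extension $y^2 + y = u$ in characteristic $2$ over a function field $F$ with constant field $\F_2$, the extension remains geometric iff $u - c \neq h^2 + h$ for every $h \in F$ and every $c \in \F_2$. A convenient sufficient criterion is the existence of a place $P$ of $F$ at which $v_P(u)$ is negative and odd, since $h^2 + h$ has even pole order at every place. For the first step ($u = x_k^5$), the infinite place of $\F_2(x_1)$ is totally ramified in the whole tower by the ramification structure of $T_0$ recalled in \cite{gast}, and a direct induction using $x_{k+1} = z_{k+1}/x_k$ shows that $x_k$ has pole order $1$ at this place, so $x_k^5$ has odd pole order. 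For the second step ($u = w_{k+1}$), the equation $w_{k+1}^2 + w_{k+1} = x_k^5$ forces $v_Q(w_{k+1}) = v_Q(x_k^5)$ at the unique place $Q$ above, hence still odd. The induction then propagates the odd-pole property to $x_{k+1}$ at the next level, yielding (a) throughout $T_3/\F_2$.
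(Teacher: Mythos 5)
Your construction is the same as the paper's: you factor the degree-$4$ Artin--Schreier step $z_{k+1}^4+z_{k+1}=x_k^5$ through the intermediate element $w_{k+1}=z_{k+1}^2+z_{k+1}$, which is exactly the paper's $t_{k+1}=z_{k+1}(z_{k+1}+1)$ satisfying $t_{k+1}^2+t_{k+1}=x_k^5$, and you take the subfields generated over $\F_2$ by these same equations. The only difference is that the paper delegates the verification (full constant field $\F_2$ and recovery of $T_1$, $T_2$ by scalar extension) to the reference \cite{balbro}, whereas you supply it directly via the odd pole order of $x_k^5$ at the totally ramified place over $x_1=\infty$; that argument is sound, up to the harmless imprecision that $v(w_{k+1})$ at the place above equals $v(x_k^5)$ computed at the place below (both $-5$), not at the same place.
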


\begin{proof}
Let $x_1$ be a transcendent element over $\F_2$ and let us set
$$H_1=\F_2(x_1), G_1=\F_4(x_1), F_1=\F_{16}(x_1).$$
We define recursively for $k \geq 1$
\begin{enumerate}
 \item $z_{k+1}$ such that  $z_{k+1}^4+z_{k+1}=x_k^5$,
 \item $t_{k+1}$ such that $t_{k+1}^2+t_{k+1} = x_k^5$\\
(or alternatively $t_{k+1}=z_{k+1}(z_{k+1}+1)$),
 \item $x_{k}=z_{k}/x_{k-1}$ if $k>1$ ($x_1$ is yet defined),
 \item \label{quatre}$H_{k,1}=H_{k,0}(t_{k+1})=H_k(t_{k+1})$,
$H_{k+1,0}= H_{k+1}=H_k(z_{k+1})$,
$G_{k,1}=G_{k,0}(t_{k+1})=G_k(t_{k+1})$,
$G_{k+1,0}= G_{k+1}=G_k(z_{k+1})$,
$F_{k,1}=F_{k,0}(t_{k+1})=F_k(t_{k+1})$,
$F_{k+1,0}= F_{k+1}=F_k(z_{k+1})$.
\end{enumerate}
By \cite{balbro}, the tower $T_1=(F_{k,i})_{k \geq 1, i=0,1}$ 
is the densified Garcia-Stichtenoth's tower
over $\F_{16}$ and the two other towers $T_2$ and $T_3$ 
are respectively the descent of $T_1$
over $\F_4$ and over $\F_2$.  
\end{proof}

\begin{propo}\label{subfield1}
Let $q=p^2=4$. For any integer $k\geq1$, for any 
integer $s$ such that $s=0,1,2$, 
the algebraic function field $H_{k,s}/\F_{p}$ in 
the tower $T_3$ has a 
genus $g(H_{k,s})=g_{k,s}$ with $B_{1}(H_{k,s})$ places 
of degree one, 
$B_{2}(H_{k,s})$ places of degree two and $B_{4}(H_{k,s})$ 
places of degree $4$ such that:
\begin{enumerate}

\item $H_k \subseteq H_{k,s} \subseteq H_{k+1}$ 
with $H_{k,0}=H_k$.


\item $g(H_{k,s})\leq \frac{g(H_{k+1})}{p^{r-s}}+1$ with 
$g(H_{k+1})=g_{k+1}\leq q^{k+1}+q^{k}$ .  


\item $B_{1}(H_{k,s})+2B_{2}(H_{k,s}) +4B_{4}(H_{k,s})\geq 
(q^2-1)q^{k-1}p^{s}$.


\item $\beta_4(T_3/\F_p)= \lim_{g_{k,s}\rightarrow +\infty} \frac{B_4(H_{k,s}/\F_p)}{g_k} =\frac{1}{4}(p^2-1)$.

\item $d(T_3/\F_p)=\lim_{l \rightarrow +\infty}\frac{g(H_{l})}{g(H_{l+1})}=\frac{1}{2}$ 
where $g(H_l)$ and $g(H_{l+1})$ 
denote the genus of two consecutive algebraic 
function fields in $T_3$.
\end{enumerate}  
\end{propo}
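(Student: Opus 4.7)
The plan is to mirror the proof of Proposition \ref{subfield}, replacing the degree-2 constant field extension $F_{k,s}/G_{k,s}$ by the degree-4 constant field extension $F_{k,s} = \F_{q^2} H_{k,s}$ supplied by Proposition \ref{proptour}, with $q=p^2=4$. Item (1) is immediate from the construction in that proposition. For (2), constant field extensions preserve the genus, so $g(H_{k,s}) = g(F_{k,s})$, and the bound follows from Theorem 2.2 of \cite{ball3} applied to the densified Garcia-Stichtenoth tower $T_1/\F_{q^2}$, together with $g_{k+1} \leq q^{k+1}+q^k$.

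For (3), I would invoke \cite[Lemma V.1.9]{stic} for the degree-4 constant field extension $F_{k,s}/H_{k,s}$: a place of $H_{k,s}/\F_p$ of degree $d$ yields $\gcd(d,4)$ places of degree $d/\gcd(d,4)$ above it, and these have degree one exactly when $d \in \{1,2,4\}$, contributing respectively $1$, $2$, $4$ degree-one places of $F_{k,s}/\F_{q^2}$. Hence
\[
B_1(F_{k,s}/\F_{q^2}) = B_1(H_{k,s}/\F_p) + 2 B_2(H_{k,s}/\F_p) + 4 B_4(H_{k,s}/\F_p),
\]
and combining with the Garcia-Stichtenoth lower bound $B_1(F_{k,s}/\F_{q^2}) \geq (q^2-1)q^{k-1}p^s$ yields (3).

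For (4), I would first show $\beta_1(T_3/\F_p) = \beta_2(T_3/\F_p) = 0$. Applying the analogous place decomposition to the intermediate degree-2 constant field extension $G_{k,s}/H_{k,s}$ gives $B_1(G_{k,s}/\F_q) = B_1(H_{k,s}/\F_p) + 2 B_2(H_{k,s}/\F_p)$. Since Proposition \ref{subfield} already yields $B_1(G_{k,s}/\F_q) \leq q+1$, both $B_1(H_{k,s})$ and $B_2(H_{k,s})$ are bounded independently of $k$ and $s$, and divided by $g_{k,s}\to\infty$ they vanish. Passing to the limit in the identity above and using $\beta_1(T_1/\F_{q^2}) = q-1$ (itself recovered from Proposition \ref{subfield} via $\beta_1(T_1/\F_{q^2}) = \beta_1(T_2/\F_q) + 2\beta_2(T_2/\F_q) = 0 + 2\cdot(q-1)/2$) gives $4\beta_4(T_3/\F_p) = q-1 = p^2-1$, i.e.\ $\beta_4(T_3/\F_p) = (p^2-1)/4$. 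Finally (5) follows since $g(H_l) = g(F_l)$, so the density of $T_3/\F_p$ coincides with that of $T_1/\F_{q^2}$ (and with $d(T_2/\F_q)$ from Proposition \ref{subfield}); alternatively, combining (2) with the Garcia-Stichtenoth asymptotic $g_{k+1}/g_k \to q = p^2$ shows that the consecutive genus ratios both across the intermediate step at $s=1$ and across a basic step tend to $1/p = 1/2$.

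The main obstacle lies in (4): getting simultaneous vanishing of $\beta_1$ and $\beta_2$ for $T_3/\F_p$ is not automatic and requires careful bookkeeping along the chain $\F_p \subset \F_q \subset \F_{q^2}$, using Proposition \ref{subfield} as a stepping stone to control the intermediate tower $T_2$ before extracting the contribution of $\beta_4$ from $\beta_1(T_1/\F_{q^2})$.
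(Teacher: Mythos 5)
Your proof is correct and follows essentially the same route as the paper's: items (1)--(3) via the degree-$4$ constant field extension $F_{k,s}=\F_{q^2}H_{k,s}$ and genus invariance, and item (4) by first killing $\beta_1$ and $\beta_2$ through the intermediate extension $G_{k,s}/H_{k,s}$ and the bound $B_1(G_{k,s}/\F_q)\leq q+1$, then extracting $4\beta_4=\beta_1(T_1/\F_{q^2})=p^2-1$. If anything you are more complete than the paper, which merely asserts the splitting identities as ``clear'' and does not explicitly argue item (5) on the density, whereas you supply the argument via $g(H_l)=g(F_l)$ and the Garcia--Stichtenoth genus asymptotics.
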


\begin{proof}
The property $1)$ follows directly from 
Proposition \ref{proptour}. Moreover, by 
Theorem 2.2 in \cite{ball3}, we have 
$g(F_{k,s})\leq \frac{g(F_{k+1})}{p^{r-s}}+1$ with 
$g(F_{k+1})=g_{k+1}\leq q^{k+1}+q^{k}$ . Then, 
as the algebraic function field $F_{k,s}$ is a 
constant field extension of $H_{k,s}$, for any integer $k$ and $s$ 
the algebraic function fields $F_{k,s}$ and $H_{k,s}$ 
have the same genus. So, the inequality satisfied by the 
genus $g(F_{k,s})$ is also true for the genus $g(H_{k,s})$. 
Moreover, the number of places of degree one 
$B_1(F_{k,s}/\F_{q^2})$ of $F_{k,s}/\F_{q^2}$ is such that $B_1(F_{k,s}/\F_{q^2})\geq (q^2-1)q^{k-1}p^{s}$. Then, as 
the algebraic function field $F_{k,s}$ is a constant 
field extension of $H_{k,s}$ of degree $4$, it is clear that for any
integer $k$ and $s$, we have $B_{1}(H_{k,s}/\F_p)+2B_{2}(H_{k,s}/\F_p)+4B_{4}(H_{k,s}/\F_p)\geq (q^2-1)q^{k-1}p^{s}$. 
Moreover, we know that for any integer $k\geq 1$, 
the number of places of degree one 
$B_1( G_{k,s}/\F_q)$ of  $G_{k,s}/\F_q$ 
corresponds at most to the number of places 
of degree one $B_1(F_{k,s}/\F_{q^2})$ of $F_{k,s}/\F_{q^2}$ 
which are totally ramified in the tower $T_1$ by \cite{gast}.  
Hence, for any integer $k$ and $s$, 
we have $B_1( G_{k,s}/\F_q)\leq q+1$ 
and so $\beta_1(T_2/\F_q)=0$. Then, as the 
algebraic function field $G_{k,s}$ is a constant 
field extension of $H_{k,s}$ of degree $2$,  
it is clear that for any integer $k$ and $s$, we have $B_{1}(H_{k,s}/\F_p)+2B_{2}(H_{k,s}/\F_p)\leq B_1( G_{k,s}/\F_q)$ 
and so
$\beta_1(T_3/\F_p)=\beta_2(T_3/\F_p)=0$. 
Moreover,  $B_{1}(H_{k,s}/\F_p)+2B_{2}(H_{k,s}/\F_p)+4B_{4}(H_{k,s}/\F_p)= B_1(F_{k,s}/\F_{q^2})$ and 
as by  \cite{gast}, $\beta_1(T_1/\F_{q^2})=A(q^2)$, we have $\beta_4(T_3/\F_p)=A(p^4)=p^2-1$.

\end{proof}

\begin{coro}  
Let $T_3/\F_2=(H_{k,s}/\F_{2})_{k\in \N, s=0,1,2}$ be 
the tower defined above. 
Then the tower $T_3/\F_2$ is an asymptotically exact 
sequence of algebraic function fields defined over $\F_2$ 
with a maximal density (for a tower).
\end{coro}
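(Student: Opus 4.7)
The plan is to derive this corollary by directly combining Proposition~\ref{subfield1} with Theorem~\ref{lem00}. First I would invoke part~(4) of Proposition~\ref{subfield1} specialised to $p=2$: it gives
$\beta_4(T_3/\F_2)=\tfrac{1}{4}(p^2-1)=\tfrac{3}{4}$.
I would then notice that $\tfrac{3}{4}=\tfrac{1}{4}(2^{4/2}-1)$ is exactly the value $\tfrac{1}{r}(q^{r/2}-1)$ of Theorem~\ref{lem00} with $r=4$ and $q=2$. Applying that theorem forces $\beta_m(T_3/\F_2)=0$ for every integer $m\neq 4$, so the sequence $T_3/\F_2$ is asymptotically exact of type $(0,0,0,\tfrac{3}{4},0,\ldots)$. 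This settles the first half of the statement.

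For the density assertion, I would quote part~(5) of Proposition~\ref{subfield1}, which already provides $d(T_3/\F_2)=\tfrac{1}{2}$, and then just verify that no tower of algebraic function fields can exceed this bound. Each consecutive inclusion $H_l\subsetneq H_{l+1}$ in a (separable) tower is a proper extension of some degree $n\geq 2$, and Riemann--Hurwitz gives $2g(H_{l+1})-2\geq n(2g(H_l)-2)$, whence $g(H_l)/g(H_{l+1})\leq 1/n + o(1)\leq 1/2 + o(1)$ as $g(H_l)\to\infty$. Passing to the $\liminf$ yields $d\leq 1/2$ for any such tower, so the value $1/2$ realised here is indeed maximal.

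There is no real obstacle: the corollary is essentially a bookkeeping assembly of Proposition~\ref{subfield1}(4)+(5) and Theorem~\ref{lem00}, the only self-contained step being the universal bound $d\leq 1/2$, which is a one-line consequence of Riemann--Hurwitz applied to each degree-$2$ Artin--Schreier step of $T_3$.
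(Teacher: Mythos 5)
Your proof is correct and follows essentially the same route as the paper, which disposes of the corollary by citing the relevant items of Proposition~\ref{subfield1} (its one-line proof even points, with a garbled cross-reference, at Proposition~\ref{subfield} instead). In fact your write-up is more complete than the paper's: you make explicit the appeal to Theorem~\ref{lem00} needed to get $\beta_m=0$ for all $m\neq 4$ (the proposition itself only treats $m=1,2$), and you supply the Riemann--Hurwitz argument showing $d\leq 1/2$ for any tower, which the paper asserts without justification.
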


\begin{proof}
It follows from (\ref{quatre}) of Proposition \ref{subfield}.
\end{proof}

\end{document}